\renewcommand{\refeq}[1]{Eq.~\ref{#1}}           
\newcommand{\refthm}[1]{Thm.~\ref{#1}}           
\newcommand{\reftab}[1]{Table~\ref{#1}}          
\newcommand{\reffig}[1]{Fig.~\ref{#1}}           
\begin{document}

\title*{Sharpened PCG Iteration Bound for High-Contrast Heterogeneous Scalar Elliptic PDEs}
\author{Philip Soliman\orcidID{0009-0000-1061-6659} \and Filipe Cumaru\orcidID{0009-0003-9516-4226} \and Alexander Heinlein\orcidID{0000-0003-1578-8104}}
\institute{Philip Soliman \at Delft Institute of Applied Mathematics, Faculty of Electrical Engineering, Mathematics
    and Computer Science, Delft, The Netherlands. \email{philipsoliman4133@gmail.com}
    \and Filipe Cumaru \at Delft Institute of Applied Mathematics, Faculty of Electrical Engineering, Mathematics
    and Computer Science, Delft, The Netherlands. \email{f.a.cumarusilvaalves@tudelft.nl}
    \and Alexander Heinlein \at Delft Institute of Applied Mathematics, Faculty of Electrical Engineering, Mathematics
    and Computer Science, Delft, The Netherlands. \email{a.heinlein@tudelft.nl}
}
%
%

\maketitle

\abstract{A new iteration bound for the preconditioned conjugate gradient (PCG) method is presented that more accurately captures convergence for systems with clustered eigenspectra, where the classical condition number-based bound is too pessimistic. By using the edge eigenvalues of each cluster in the spectral distribution, the bound is shown to be orders of magnitude sharper than the classical bound for certain examples. Its effectiveness is demonstrated on a high-contrast elliptic PDE preconditioned with a two-level overlapping Schwarz preconditioner, where the performance of different (algebraic) coarse spaces is successfully distinguished. A key contribution of this work is the observation that, for certain high-contrast problems, simpler coarse spaces can be made competitive in terms of PCG convergence. Conversely, more complex preconditioners are not always required. Finally, it is shown that the bound can be estimated effectively from Ritz values computed during early PCG iterations.
}

\section{Introduction}\label{sec:introduction}
We study the convergence of the preconditioned conjugate gradient (PCG) method for the linear system $A\mathbf{u}=\mathbf{b}$ arising from a finite element discretization of a scalar elliptic partial differential equation (PDE) with a high-contrast heterogeneous coefficient function. In particular, let $\Omega\subset\mathbb{R}^d$ ($d=2,\,3$) be a bounded domain with Lipschitz boundary $\partial\Omega$, and $\mathcal{C}\in L^\infty(\Omega)$ be scalar field defined on $\Omega$. Let us consider the problem: find $u$ such that
\begin{equation}
    \begin{aligned}
        -\nabla\cdot\left(\mathcal{C}\nabla u\right) & = f   & \text{in } & \Omega,         \\
        u                                            & = u_D & \text{on } & \partial\Omega.
    \end{aligned}
    \label{eq:elliptic_problem}
\end{equation}
We employ a two-level overlapping additive Schwarz (2-OAS) preconditioner to the linear system $A\mathbf{u} = \mathbf{b}$ resulting from discretization of \refeq{eq:elliptic_problem}. Let $\Omega_i$, $i=1,\ldots,N$ be a non-overlapping domain decomposition of $\Omega$. Correspondingly, let $\Omega'_i \supset \Omega_i$ be subdomains with overlap $\delta$. We denote by $R_i$ the restriction operator such that $A_i = R_i A R_i^T$ is the submatrix of $A$ with local Dirichlet boundary conditions on $\Omega'_i$. Finally, let $\Phi$ be the prolongation operator whose columns span a coarse space and $A_0 = \Phi A \Phi^T$ be the Galerkin projection of $A$ onto that coarse space. The 2-OAS preconditioner is then given by
\begin{equation}
    M^{-1}_{2-\text{OAS}} = \Phi A_0^{-1} \Phi^T + \sum_{i=1}^N R_i^T A_i^{-1} R_i,
    \label{eq:two_level_oas}
\end{equation}
leading to the preconditioned system
\begin{equation}
    \tilde{A}\mathbf{u} \coloneqq M^{-1}_{2-\text{OAS}} A \mathbf{u} = M^{-1}_{2-\text{OAS}} \mathbf{b} \coloneqq \tilde{\mathbf{b}}.
    \label{eq:preconditioned_system}
\end{equation}
The PCG method's convergence and scalability depends heavily on the chosen coarse space, especially for high-contrast coefficients. This work focuses on three such coarse spaces: the algebraic multiscale solver (AMS) \cite{ams_framework_Wang2014}, generalized Dryja-Smith-Widlund (GDSW) \cite{gdsw_coarse_space_Dohrmann2008}, and the reduced-dimension GDSW (RGDSW) \cite{rgdsw_coarse_space_Dohrmann2017}.

\begin{figure}[H]
    \begin{minipage}[c]{0.38\textwidth}
        \caption{Example of a periodic high-contrast coefficient distribution with multiple inclusions along subdomain interfaces for a subdomain size $H$ = 1/4 and local problem of size $H/h = 16$. Non-overlapping subdomain boundaries $\partial\Omega_i$ and grid elements are indicated with thick and thin lines, respectively. \textbf{Light-blue}: $\mathcal{C}=1$; \textbf{red}: $\mathcal{C}=10^8$.}
        \label{fig:coefficient_distribution}
    \end{minipage}
    \begin{minipage}[c]{0.48\textwidth}
        \includegraphics{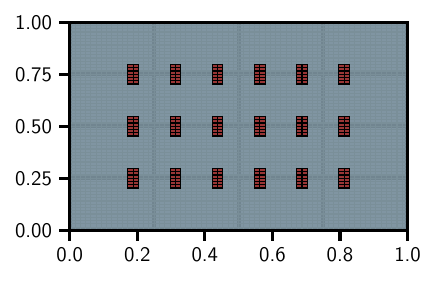}
    \end{minipage}
\end{figure}

We continue the work done in \cite{ams_coarse_space_comp_study_Alves2024}, which studied the numerical performance of the 2-OAS preconditioner with the AMS, GDSW and RGDSW coarse spaces for the high-contrast elliptic problem in \refeq{eq:elliptic_problem}. The key finding relevant to this work is that the traditional condition number-based bound fails to capture the observed differences in iteration counts between the coarse spaces, if $\mathcal{C}$ contains the regular pattern of multiple high coefficient inclusions crossing subdomain interfaces shown in \reffig{fig:coefficient_distribution}. As shown in \cite[Fig. 4]{ams_coarse_space_comp_study_Alves2024}, while AMS and GDSW significantly outperformed RGDSW in terms of iteration count, each of their condition numbers were comparable, indicating that the condition number alone is insufficient to predict the PCG iteration count. The authors suggest that these differences can be attributed to the distinct eigenvalue distributions of the preconditioned systems.

In this work, we introduce a sharper iteration bound for the PCG method based on the edge eigenvalues of each cluster in the spectral distribution. Additionally, we show that, in practice, this bound can be estimated using Ritz values resulting from early PCG iterations.

\section{Multi-cluster PCG iteration bound}\label{sec:multi_cluster_bound}
Let $\mathbf{u}_0$ be an initial guess and $\mathbf{u}_1, \dots, \mathbf{u}_m$ be subsequent PCG iterates. Also, let $\mathbf{u}^*$ be the exact solution of \refeq{eq:preconditioned_system}. Assuming $M^{-1}_{2-\text{OAS}}$ is symmetric positive definite (SPD), we can define $\tilde{A}_s = M_{2-\text{OAS}}^{-1/2}AM_{2-\text{OAS}}^{-1/2}$. Then, the $\tilde{A}_s$-norm relative error
\[
    \epsilon_m = \frac{\|\mathbf{u}^* - \mathbf{u}_m\|_{\tilde{A}_s}}{\|\mathbf{u}^* - \mathbf{u}_0\|_{\tilde{A}_s}}
\]
after $m$ PCG iterations is bounded by the constrained min-max problem
\begin{equation}
    \epsilon_m < \min_{r \in \mathcal{P}_{m}, r(0) = 1} \max_{\lambda \in \sigma} |r(\lambda)|,
    \label{eq:cg_error_bound}
\end{equation}
where $\mathcal{P}_{m}$ is the space of polynomials of degree at most $m$ and $\sigma \coloneqq \sigma(\tilde{A}) = \{\lambda_1, \ldots, \lambda_n\}$ is the spectrum of $\tilde{A}$, ordered from smallest to largest. Note that we used similarity of $\tilde{A}$ and $\tilde{A}_s$ as well as \cite[Sect. 9.2 and Thm. 6.29]{iter_method_saad} to obtain \refeq{eq:cg_error_bound}. In the classical theory of the CG method, we assume a uniformly distributed spectrum and simplify the discrete spectrum $\sigma$ in \refeq{eq:cg_error_bound} to a continuous interval $\sigma_1 \coloneqq [\lambda_1,\lambda_n]$. Correspondingly, let $m_1\in\mathbb{N}^+$ denote the classical PCG iteration bound. We can derive $m_1$ by introducing the polynomial
\begin{equation}
    r(\lambda) \leftarrow r_1(\lambda) \coloneqq \hat{C}_{m_1} \coloneqq \frac{C_{m_1}\left(T(\lambda)\right)}{C_{m_1}\left(T(0)\right)},
    \label{eq:chebyshev_polynomial}
\end{equation}
where $C_{m_1}$ is the $m_1^{\textrm{th}}$-degree Chebyshev polynomial of the first kind and
\[
    T(\lambda) \coloneqq \frac{2\lambda - (\lambda_1 + \lambda_n)}{\lambda_n - \lambda_1}.
\]
The polynomial $r_1(\lambda)$ solves the minimization problem in \refeq{eq:cg_error_bound} exactly in the interval $\sigma_1$. Finally, let some relative error tolerance $\epsilon$ be given. The classical CG iteration bound follows by requiring $\max_{\lambda \in \sigma_1} |r_1(\lambda)|<\epsilon$ and solving this inequality for the degree $m_1$, yielding
\begin{equation}
    m_1(\kappa) \coloneqq \left\lfloor \frac{\sqrt{\kappa}}{2} \ln\left(\frac{2}{\epsilon}\right) + 1 \right\rfloor,
    \label{eq:cg_iteration_bound_1_cluster}
\end{equation}
where $\kappa = \lambda_n/\lambda_1$ is the condition number of $\tilde{A}$; c.f. \cite[Eq. 6.128]{iter_method_saad}.

High-contrast coefficients lead to clustered eigenspectra with condition numbers on the order of the contrast in $\mathcal{C}$; see \cite{msfem_coarse_space_Graham_2007}. For such cases, $m_1$ is too pessimistic since it only depends on the relatively large condition number $\kappa$ and not on the full spectral distribution. To address this overestimation of $m$ by $m_1$, we consider the simplest, non-trivial case of a spectrum with a single spectral gap; see \cite[Fig. 5]{ams_coarse_space_comp_study_Alves2024} for examples of such a spectrum. Let $k$ be the \textit{partition index} corresponding to those eigenvalues that demarcate the spectral gap as the interval $[\lambda_k, \lambda_{k+1}]$. We define the two-cluster spectrum and PCG iteration bound as $\sigma \subset [\lambda_1,\lambda_k] \cup [\lambda_{k+1},\lambda_n] \coloneqq \sigma_2$ and $m_2\in\mathbb{N}^+$, respectively. Similar to \refeq{eq:chebyshev_polynomial}, we set
\begin{equation*}
    r(\lambda) \leftarrow r_2(\lambda) \coloneqq \hat{C}^{(1)}_{p}(\lambda)\hat{C}^{(2)}_{m_2-p}(\lambda),
\end{equation*}
where
\begin{equation}
    \hat{C}^{(i)}_{q}(\lambda) \coloneqq \frac{C_q\left(T_i(\lambda)\right)}{C_q\left(T_i(0)\right)} \quad q = p, m_2 - p,
    \label{eq:generalized_scaled_chebyshev_polynomial}
\end{equation}
is the scaled, $q^{\textrm{th}}$-degree Chebyshev polynomial for the $i$-th cluster with
\begin{equation}
    T_i(\lambda) \coloneqq \frac{2\lambda - (\lambda_{k_{i-1}+1} + \lambda_{k_i})}{\lambda_{k_i} - \lambda_{k_{i-1}+1}} \textrm{ with } k_0=0,\ k_1=k,\ k_2=n.
    \label{eq:Ti}
\end{equation}
Although $r_2(\lambda)$ does not necessarily solve the minimization problem in \refeq{eq:cg_error_bound} in the union of intervals, $\sigma_2$, one can still derive an expression for $m_2$ that is sharper than $m_1$ for clustered spectra. Namely,
\begin{equation}
    m_2(\kappa, \kappa_1, \kappa_2)
    \coloneqq\left\lfloor
    1
    + \frac{\sqrt{\kappa_2}}{2}\ln\left(\frac{4\kappa}{\kappa_1}\right)
    + \frac{1}{2}\ln\left(\frac{2}{\epsilon}\right)\left(
    \sqrt{\kappa_1}
    + \sqrt{\kappa_2}
    + \frac{\sqrt{\kappa_1\kappa_2}}{2}\ln\left(\frac{4\kappa}{\kappa_1}\right)
    \right)
    \right\rfloor,
    \label{eq:cg_iteration_bound_2_clusters}
\end{equation}
where $\kappa = \lambda_n/\lambda_1$, $\kappa_1 = \lambda_k/\lambda_1$ and $\kappa_2 = \lambda_n/\lambda_{k+1}$; c.f. \cite[Eq. 4.4]{cg_sharpened_convrate_Axelsson1976}.

We generalize the approach to obtain $m_2$ in \cite{cg_sharpened_convrate_Axelsson1976} to multi-cluster spectra with $s$ clusters, as shown in \reffig{fig:multiple_eigenvalue_clusters}.
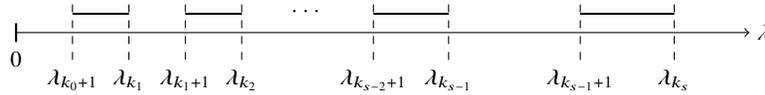
\begin{figure}[htpb!]
    \centering
    \begin{tikzpicture}[scale=2.5]

        \draw[->] (-0.1,0) -- (3.8,0) node[right] {$\lambda$};

        \foreach \x/\label in {-0.1/0}
            {
                \draw[thick] (\x,0.05) -- (\x,-0.05);
                \node[below=4pt] at (\x,0) {$\label$};
            }

        \draw[thick] (0.2,0.1) -- (0.5,0.1); 
        \draw[thick] (0.8,0.1) -- (1.1,0.1); 
        \draw[thick] (1.8,0.1) -- (2.2,0.1); 
        \draw[thick] (2.9,0.1) -- (3.4,0.1); 

        \node at (1.45,0.1) {$\cdots$};

        \foreach \x/\label in {
        0.2/{\lambda_{k_0 + 1}}, 0.5/{\lambda_{k_1}},
        0.8/{\lambda_{k_1 + 1}}, 1.1/{\lambda_{k_2}},
        1.8/{\lambda_{k_{s-2} + 1}}, 2.2/{\lambda_{k_{s-1}}},
        2.9/{\lambda_{k_{s-1} + 1}}, 3.4/{\lambda_{k_s}}
        }
        {
        \draw[thin,dashed] (\x,0.15) -- (\x,-0.15);
        \node[above] at (\x,-0.35) {$\label$};
        }

    \end{tikzpicture}
    \caption{Example of an eigenvalue spectrum consisting of multiple clusters.}
    \label{fig:multiple_eigenvalue_clusters}
\end{figure}
We introduce the partition indices $k_i, \ i = 0,\dots,s$, and define the $i^{\textrm{th}}$ cluster as the interval $I_i = [\lambda_{k_{i-1}+1}, \lambda_{k_i}]$. Then, we have $\sigma \subset \cup_{i=1}^s I_i \coloneqq \sigma_s$ and approximate the solution of \refeq{eq:cg_error_bound} by the polynomial
\begin{equation}
    r_s(\lambda) \coloneqq \prod_{i=1}^s \hat{C}^{(i)}_{p_i}(\lambda).
    \label{eq:multi_cluster_polynomial}
\end{equation}
Next, we bound \refeq{eq:multi_cluster_polynomial} by $\epsilon$ using the following \refthm{thm:chebyshev_property}, which ensures that the $i^{\textrm{th}}$ cluster's polynomial $\hat{C}^{(i)}_{p_i}(\lambda)$ does not amplify the value of $|r_s(\lambda)|$ on $I_j$ with $j<i$.
\begin{theorem}
    Let $k_i, \ i = 0,\dots,s$ be the partition indices for a spectrum with $s$ clusters $\sigma_s$, $\hat{C}^{(i)}_{p_i}(\lambda)$ be as in \eqref{eq:generalized_scaled_chebyshev_polynomial} and let the $i^{\textrm{th}}$ cluster be denoted as $I_i = [\lambda_{k_{i-1}+1},\lambda_{k_i}]$. Then, for any $i > 1$ and $1 \leq j < i$, it holds that
    \begin{equation*}
        \max_{\lambda\in I_j}\left|\hat{C}^{(i)}_{p_i}(\lambda)\right| < 1.
    \end{equation*}
    \label{thm:chebyshev_property}
\end{theorem}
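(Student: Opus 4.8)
The plan is to reduce the statement to an elementary monotonicity property of the univariate Chebyshev polynomial $C_{p_i}$ outside the interval $[-1,1]$, exploiting that the affine map $T_i$ sends the $i$-th cluster $I_i$ onto $[-1,1]$ while sending everything strictly to its left --- in particular every earlier cluster $I_j$ with $j<i$, as well as the origin --- strictly to the left of $-1$.

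First I would record the geometry. By construction, $T_i$ is the increasing affine bijection $I_i \to [-1,1]$ with $T_i(\lambda_{k_{i-1}+1}) = -1$ and $T_i(\lambda_{k_i}) = 1$. Since the eigenvalues are ordered increasingly and the partition points separate the clusters, every $\lambda \in I_j$ with $j<i$ satisfies $\lambda \le \lambda_{k_j} \le \lambda_{k_{i-1}} < \lambda_{k_{i-1}+1}$, hence $T_i(\lambda) < -1$. Moreover, because $\tilde{A}_s$ (and hence $\tilde{A}$) is SPD, all eigenvalues are positive, so $\lambda_{k_{i-1}+1}, \lambda_{k_i} > 0$, and a direct computation gives $T_i(0) = -\frac{\lambda_{k_{i-1}+1}+\lambda_{k_i}}{\lambda_{k_i}-\lambda_{k_{i-1}+1}} < -1$; also $T_i(0) < T_i(\lambda)$ for every $\lambda \in I_j$, since $T_i$ is increasing and $\lambda > 0$.

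Second I would invoke the standard fact that, for any degree $q \ge 1$, $|C_q|$ is strictly decreasing on $(-\infty,-1]$: on $[1,\infty)$ one has $C_q(\cosh\theta) = \cosh(q\theta)$, which is strictly increasing, and the parity relation $C_q(-x) = (-1)^q C_q(x)$ transfers this to $|C_q(-x)| = C_q(x)$ for $x \in [1,\infty)$. Thus $t' < t \le -1$ implies $|C_q(t)| < |C_q(t')|$. Applying this with $q = p_i$, $t = T_i(\lambda)$, and $t' = T_i(0)$ gives $|C_{p_i}(T_i(\lambda))| < |C_{p_i}(T_i(0))|$ for every $\lambda \in I_j$; dividing by the positive number $|C_{p_i}(T_i(0))|$ yields $|\hat{C}^{(i)}_{p_i}(\lambda)| < 1$. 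Because $I_j$ is compact and $T_i$ increasing, the supremum of the left-hand side over $I_j$ is attained at the most negative argument $\lambda = \lambda_{k_{j-1}+1}$, so in fact $\max_{\lambda \in I_j} |\hat{C}^{(i)}_{p_i}(\lambda)| = |C_{p_i}(T_i(\lambda_{k_{j-1}+1}))|/|C_{p_i}(T_i(0))| < 1$, which is the assertion.

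The argument is short, and I do not expect a genuine obstacle; the care goes into the hypotheses implicit in the construction rather than into any single step. Specifically, one must use that each cluster is a nondegenerate interval (so that $T_i$ is well defined) and that consecutive clusters are genuinely separated, $\lambda_{k_{i-1}} < \lambda_{k_{i-1}+1}$, as otherwise the strict inequalities can degenerate; and one must take $p_i \ge 1$, since $\hat{C}^{(i)}_0 \equiv 1$ would force the maximum to equal $1$. All of these hold in the clustered, SPD setting considered here, so the proof consists of exactly the two monotonicity observations above, applied cluster by cluster.
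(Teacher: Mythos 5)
Your proof is correct and takes essentially the same route as the paper: both arguments hinge on the strict monotonicity of the (scaled) Chebyshev polynomial to the left of the $i$-th cluster, combined with the observation that $0$ and each earlier cluster $I_j$ lie on the same side of $I_i$, with $0$ the farther point, so that $\left|\hat{C}^{(i)}_{p_i}\right|$ at $0$ dominates its values on $I_j$. You obtain the monotonicity of $|C_q|$ on $(-\infty,-1]$ via the $\cosh$ parametrization and parity, whereas the paper deduces it by locating all critical points of $C_q$ inside $(-1,1)$; this is a cosmetic difference within the same overall strategy.
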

\begin{proof}
    The local extrema of $C_q(x)$ and the roots of the first derivative of $C_q(x)$ for $x\in\mathbb{R}$ are given by $x_l = \cos\left(l\pi/q\right)$ with $C_q(x_l) = (-1)^l$ for $l = 1,\ldots,q-1$; c.f. \cite[Ch. 2]{mason2002chebyshev}. By the fundamental theorem of algebra, the first derivative of $C_q(x)$ has no roots other than $x_l \in (-1, 1)$ for $l=1,\ldots,q-1$. Since $T_i(\lambda) \in [-1,1]$ for $\lambda \in I_i$, all roots of the first derivative of $\hat{C}^{(i)}_{p_i}(\lambda)$ are in $I_i$. Therefore, $\hat{C}^{(i)}_{p_i}(\lambda)$ is strictly monotone outside $I_i$. Since $\left|\hat{C}^{(i)}_{p_i}(0)\right| = 1$ and $\left|\hat{C}^{(i)}_{p_i}(\lambda_{k_{i-1}+1})\right| < 1$, $\left|\hat{C}^{(i)}_{p_i}(\lambda)\right|$ is monotonically decreasing in $[0,\lambda_{k_{i-1}+1}]$. In conclusion, $\left|\hat{C}^{(i)}_{p_i}(\lambda)\right| < 1$ on $I_j$ with $I_j \subset (0,\lambda_{k_{i-1}+1}]$, which holds for $i,j$ as stated in the theorem. 
\end{proof}
A consequence of the proof of \refthm{thm:chebyshev_property} is that, for all $i,j$ such that $1 \leq j \leq i \leq s$, $\left|\hat{C}^{(j)}_{p_j}(\lambda)\right|$ is monotonically increasing in the interval $[\lambda_{k_j}, \infty)$. Therefore, the maximum value of the polynomial $r_i(\lambda)$ on an entire $i$-cluster spectrum $\sigma_i$ is attained at the largest edge of the $i$-th cluster, i.e., at $\lambda_{k_i}$. Hence, we can iteratively bound $|r_s(\lambda)|$ by requiring
\begin{equation}
    \left|\prod_{j=1}^i \hat{C}^{(j)}_{p_j}(\lambda_{k_i})\right| < \epsilon
    \label{eq:multi_cluster_bound_condition}
\end{equation}
for all $i = 1,\ldots,s$ and in that order. The iterative bounding process via \refeq{eq:multi_cluster_bound_condition} yields expressions for the degrees
\begin{equation}
    p_i = \left\lceil\log_{\gamma_i}\left(\frac{\epsilon}{2}\right) + \sum_{j=1}^{i-1} p_j \log_{\gamma_i}\left(\frac{T_j(0) + \sqrt{T_j(0)^2 -1}}{T_j(\lambda_{k_i}) - \sqrt{T_j(\lambda_{k_i})^2 -1}}\right)\right\rceil,
    \label{eq:pi}
\end{equation}
where $\gamma_i = \frac{\sqrt{\kappa_i}-1}{\sqrt{\kappa_i} + 1}$ and $\kappa_i = \frac{\lambda_{k_i}}{\lambda_{k_{i-1}+1}}$; c.f. \cite{soliman2025cg}.

Finally, the multi-cluster PCG iteration bound is given as the total degree of $r_s(\lambda)$ from \refeq{eq:multi_cluster_polynomial}
\begin{equation}
    m_s(p_1, \ldots, p_s) \coloneqq \sum_{i=1}^{s} p_i,
    \label{eq:cg_iteration_bound_s_clusters}
\end{equation}
with $p_i$ as in \refeq{eq:pi}.

Calculating $m_s$ from \refeq{eq:cg_iteration_bound_s_clusters} requires partition indices $k_i$ that split the eigenvalues into disjoint clusters. To obtain $k_i$, we define a candidate two-cluster split using the largest relative gap
\begin{equation}
    k^*(\sigma) \coloneqq \arg\max_{i} \frac{\lambda_{i+1}}{\lambda_i},
    \label{eq:partition_index_2_clusters}
\end{equation}
Next, we require the two-cluster bound from \refeq{eq:cg_iteration_bound_2_clusters} resulting from this candidate split to be smaller than the classical bound from \refeq{eq:cg_iteration_bound_1_cluster}, i.e. $m_2 < m_1$. In doing so, we derive a condition number threshold via the Lambert $W$ function, the inverse of $y\mapsto y\mathrm{e}^y$. Our analysis uses the principal negative branch $W_{-1}$ and its asymptotic expansion for $x\to0^-$; see \cite[Eq. 4.19]{evaluation_of_the_lambert_w_function_Corless1996}. Letting $\kappa_1=\lambda_{k^*}/\lambda_1$, $\kappa_2=\lambda_n/\lambda_{k^*+1}$, $x=-\left(2\sqrt{\kappa_2}\exp\left(\frac{1}{\sqrt{\kappa_2}}\right)\right)^{-1}$, $L=\ln(-x)$ and $l=\ln(-L)$ gives the threshold explicitly as
\begin{equation}
    \kappa > 4\kappa_1\kappa_2 \left(L - l + \frac{l}{L}\right)^2 + \mathcal{O}\left(\frac{\kappa_1\kappa_2l^4}{L^4}\right),
    \label{eq:threshold_inequality_explicit_expansion}
\end{equation}
If the condition in \refeq{eq:threshold_inequality_explicit_expansion} holds, we accept the candidate split and apply a greedy recursion on each subcluster until no more splits are accepted or clusters consist of singletons; see \cite[Sect. 4.2]{soliman2025cg} for the full derivation of \refeq{eq:threshold_inequality_explicit_expansion}, \cite[Alg. 8]{soliman2025cg} for the partitioning algorithm and \cite[Alg 9]{soliman2025cg} for the complete computation of $m_s$.

\section{Numerical experiments}\label{sec:experiments}
We solve the system from \refeq{eq:preconditioned_system} for the coefficient distribution shown in \reffig{fig:coefficient_distribution} using the PCG method with the $M_{2-\textrm{OAS}}$ preconditioner from \refeq{eq:two_level_oas} with the AMS, GDSW, and RGDSW coarse spaces and for subdomain sizes $H=1/4, 1/8, 1/16, 1/32, 1/64$, grid size $h = H/16$, overlap $\delta = 2h$ and relative error tolerance $\epsilon = 10^{-8}$.

First, we compute our multi-cluster bound $m_s$ using Ritz values obtained at convergence. \reffig{fig:absolute_performance} shows that the classical bound $m_1$ severely overestimates the iteration count and fails to differentiate between the coarse spaces. In contrast, $m_s$ is consistently accurate, staying within a factor of 1-10 of the true iteration count $m$ and correctly discerning the coarse spaces' performance.
\begin{figure}[htpb!]
    \centering
    \includegraphics{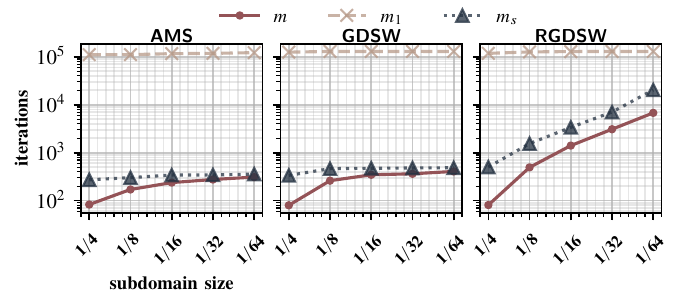}
    \caption{
        Actual PCG iterations $m$ versus the classical ($m_1$) and multi-cluster ($m_s$) bounds computed at convergence for varying subdomain sizes $H$.}
    \label{fig:absolute_performance}
\end{figure}

In \reftab{tab:cg_iteration_bounds}, we evaluate $m_s$ as an early estimator by computing it during the initial PCG iterations, i.e., within $i_{\max}$ iterations or a fraction $r$ of the total iteration count $m$. We calculate $m_s$ only after the edge Ritz values in each cluster have stabilized, checked every $\eta=5$ iterations against a tolerance $\tau=0.1$
\begin{equation}
    \frac{\lambda_{k^{(i)}_j}}{\lambda_{k^{(i-\eta)}_j}} < 1 + \tau, \quad j = 0, \ldots, s.
    \label{eq:convergence_of_extremal_ritz}
\end{equation}

The results in \reftab{tab:cg_iteration_bounds} show that $m_s$, computed using early Ritz values, provides a good estimate of the final iteration count $m$ for AMS and GDSW for $i < \min\{100, 0.5m\}$. Although the performance of $m_s$ as an upper bound deteriorates slightly for smaller subdomain sizes $H\leq1/16$ for GDSW and even more so for RGDSW, it is still able to reflect the differences in convergence speed between the three coarse spaces. In contrast, the classical bound $m_1$ is not able to do so, and again, largely overestimates $m$ in all cases. The underestimation of the number of iterations for RGDSW by $m_s$ is due to the fact that the early Ritz values do not approximate the full spectrum of the preconditioned system matrix well in these cases, as shown in \cite[Sec. 6.2]{soliman2025cg}.

\begin{table}[htpb!]
    \centering
    \caption{The number of PCG iterations required to achieve convergence $m$, corresponding iteration bounds $m_1$, $m_s$ and the iteration at which the bounds are computed $i$. Cell colors indicate if bounds are larger (blue) or smaller (red) than $m$, with shading proportional to absolute difference. Bounds are calculated with $\eta=5$, $\tau=0.1$, $i_{\max}$=100 (AMS), 100 (GDSW), 300 (RGDSW) and fraction of total run $r=0.5$.}
    \label{tab:cg_iteration_bounds}
    \begin{tabular}{lrrrrrrrrr}
        \toprule
              & \multicolumn{3}{c}{$\mathbf{H=1/4}$}                   & \multicolumn{3}{c}{$\mathbf{H=1/16}$}                   & \multicolumn{3}{c}{$\mathbf{H=1/64}$}                                                                                                                                                                                                                                                                                                                                                                              \\
              & AMS                                                    & GDSW                                                    & RGDSW                                                   & AMS                                                     & GDSW                                                    & RGDSW                                                   & AMS                                                     & GDSW                                                    & RGDSW                                                  \\
        \midrule
        $m$   & 83                                                     & 80                                                      & 81                                                      & 238                                                     & 346                                                     & 1,406                                                   & 310                                                     & 407                                                     & 6,766                                                  \\
        \cline{1-10}
        $m_1$ & {\cellcolor[HTML]{AFC9F6}} \color[HTML]{000000} 98,421 & {\cellcolor[HTML]{AFC9F6}} \color[HTML]{000000} 124,727 & {\cellcolor[HTML]{AFC9F6}} \color[HTML]{000000} 117,699 & {\cellcolor[HTML]{AFC9F6}} \color[HTML]{000000} 110,969 & {\cellcolor[HTML]{7EAFF1}} \color[HTML]{000000} 123,872 & {\cellcolor[HTML]{AFC9F6}} \color[HTML]{000000} 122,195 & {\cellcolor[HTML]{AFC9F6}} \color[HTML]{000000} 114,629 & {\cellcolor[HTML]{7EAFF1}} \color[HTML]{000000} 121,215 & {\cellcolor[HTML]{7EAFF1}} \color[HTML]{000000} 69,645 \\
        \cline{1-10}
        $m_s$ & {\cellcolor[HTML]{7EAFF1}} \color[HTML]{000000} 191    & {\cellcolor[HTML]{7EAFF1}} \color[HTML]{000000} 88      & {\cellcolor[HTML]{7EAFF1}} \color[HTML]{000000} 133     & {\cellcolor[HTML]{7EAFF1}} \color[HTML]{000000} 310     & {\cellcolor[HTML]{945357}} \color[HTML]{F1F1F1} 306     & {\cellcolor[HTML]{7EAFF1}} \color[HTML]{000000} 2,185   & {\cellcolor[HTML]{7EAFF1}} \color[HTML]{000000} 324     & {\cellcolor[HTML]{945357}} \color[HTML]{F1F1F1} 312     & {\cellcolor[HTML]{945357}} \color[HTML]{F1F1F1} 2,612  \\
        \cline{1-10}
        $i$   & 41                                                     & 36                                                      & 36                                                      & 81                                                      & 76                                                      & 241                                                     & 91                                                      & 81                                                      & 291                                                    \\
        \cline{1-10}
        \bottomrule
    \end{tabular}
\end{table}

Allowing a modest increase in PCG iterations before evaluating $m_s$ improves the early estimate for the GDSW coarse space. Indeed, in \cite[Tab.~6.3]{soliman2025cg} $i_{\max}$ was raised to 300, and produced $m_s=485$ for $H=1/64$ at iteration $i=186$, a valid upper bound for $m=407$. Similar behavior is suggested for RGDSW, although Ritz values require substantially more iterations to stabilize and yield reliable estimates.

\section{Conclusion}\label{sec:conclusion}

We introduced a new multi-cluster PCG iteration bound that is sharper for systems with clustered spectra compared to the classical bound. We demonstrated the effectiveness of the new bound on a high-contrast problem, where it correctly discerned between the relative performance of several algebraic coarse spaces in a 2-OAS preconditioner. The new bound can be estimated from early PCG iterations, although its accuracy relies on the convergence rate of the underlying Ritz values. Our results suggest that the PCG iterations are able to handle bad eigenmodes that are not resolved by the coarse space as long as those are grouped into clusters. We may thus choose less computationally complex coarse spaces in those cases. Future work could focus on refining the spectral partitioning strategy, applying the bound to more complex PDE systems, and, if possible, developing \textit{a priori} cluster edge eigenvalue estimates to remove the dependency on Ritz values.

\bibliographystyle{spmpsci}
\bibliography{bibliography}

\end{document}